\documentclass[12pt]{amsart}
\pagestyle{plain}
\usepackage{fullpage}

\usepackage{graphicx}
\usepackage{bbm}
\usepackage{amsfonts}
\usepackage{amssymb}
\usepackage{amsmath}
\usepackage{amsthm}
\usepackage{latexsym}
\usepackage[hidelinks]{hyperref}

\newtheorem{theorem}{Theorem}[section]
\newtheorem{lemma}[theorem]{Lemma}
\newtheorem{corollary}[theorem]{Corollary}
\newtheorem{proposition}[theorem]{Proposition}

\newcommand{\ignore}[1]{}

\theoremstyle{remark}
    \newtheorem{remark}[theorem]{Remark}

\newcommand{\EE}{\mathbb{E} }

\newcommand{\one}{\mathbbm{1} }
\newcommand{\PP}{\mathbb{P} }

\newcommand{\RR}{\mathbb{R} }

\newcommand{\ff}{\mathcal{F} }

\date{\today}
\thanks{\noindent\textit{Keywords and phrases:} 
branching  process, FKPP equation, stochastic control, front propagation  }
\thanks{\textit{Mathematics Subject Classification 2010: 60J80, 93E20, 35B40   }   }

\title{Optimisation-based representations for branching processes}

\author{David P. Driver, Michael R. Tehranchi \\
University of Cambridge }
\address{Statistical Laboratory\\
Centre for Mathematical Sciences\\
Wilberforce Road\\
Cambridge CB3 0WB\\
UK}
\email{d.driver.maths@gmail.com \\ m.tehranchi@statslab.cam.ac.uk}

\begin{document}

\numberwithin{equation}{section}

\begin{abstract}  
 It is shown that a certain
 functional  of a branching process has representations
 in
 terms of both a maximisation problem and a minimisation problem.
 A consequence of these representation is that upper and lower bounds on the functional can be  found easily, yielding a non-asymptotic
 Trotter product formula.
As an application, the speed of the right-most particle of a branching L\'evy process
is calculated.
\end{abstract}

\maketitle

\section{Introduction}\label{se:intro}
Consider a branching process $\{ X_t^i: i \in I_t, \ t \ge 0\}$ constructed as follows.  Initially, there is one particle sitting
at a point $x_0$ in a Polish space $\mathcal X$. The position of the particle
then evolves according to the law of a given right-continuous strong Markov process $X$
started from $X_0=x_0$. At  time $T > 0$, the initial particle is killed and
replaced with $N$  particles,  where both $T$ and $N$ are random.
Each of  these new particles then move and branch as independent
copies of the initial particle, except that each new particle
now starts from  the final
position  $X_T$ of the initial particle.
 We assume that the conditional law of  the 
 first branching time  $T$ given the Markov process $X$ is
  $$
  \PP( T > t | X ) = e^{-\int_0^t \lambda(X_s) ds } \mbox{ for all } t \ge 0,
  $$
  for a given non-negative 
  measurable function $\lambda$.
We also assume that the conditional distribution of the
number of offspring $N$ given $X$ and $T$ only depends on $X_T$,
the location of the initial particle at the time of branching. 
Letting $I_t$ be the collection of particles alive at time $t$,
a construction of such a branching process $\{ X_t^i: i \in I_t,\  t \ge 0\}$ can be found in the   paper
of Ikeda--Nagasawa--Watanabe \cite{INW}.  In what follows,
we will let $X_t$ denote the position at time $t$
of the initial particle if it were allowed to continue living after the branching event. 

We will assume that the branching rate $\lambda(x)$
and the mean number of new offspring $\EE(N|X_T=x)$ 
per branching event are bounded functions of $x \in \mathcal X$. 
 This  is a sufficient
condition that the branching process does not explode in finite
time, so that $\PP(|I_t| < \infty) = 1$ for all $t \ge 0$.
See, for instance, the book of Athreya \& Ney \cite[Theorem III.2.1]{AN}

Our main result is the following:
\begin{theorem}\label{th:main-max-min}
Let $\ff$  be the filtrations generated by $X$.  Let $\mathcal Z$ be the set of 
bounded   adapted processes, let $\mathcal Z^\circ $ be the set of 
bounded    anticipative processes, and let $\mathcal M$ the
set of   non-negative martingales.  

  Given a measurable function $f : \mathcal X \to [0,1]$ and $t \ge 0$, let
  \begin{align*}
  u & = \EE  \left[ \prod_{i \in I_t} f(X_t^i) \right] \\
  M & = \max_{Z \in \mathcal Z} 
  \EE   \left[ e^{\int_0^t  Z_s ds }f( X_t)  
- \int_0^{t  }   e^{\int_0^s Z_r dr } 
h(X_s, Z_s + \lambda(X_s) ) ds \right] \\
m &= \min_{\zeta \in \mathcal M} 
  \EE   \left[\mathrm{ess \ sup}_{z \in \mathcal Z^\circ} \left\{
  e^{\int_0^t  z_s ds }f( X_t)  
- \int_0^{t  }   e^{\int_0^s z_r dr } 
\left(z_s + 
[h(X_s, z_s + \lambda(X_s)) - z_s] \frac{\zeta_{t }}{\zeta_s} \right) ds \right\}   \right] \\
   \end{align*}  
where   the function $h$ is defined by
$$
h(x,z) = \max_{0 \le \eta \le 1} \{ \eta z - \lambda(x) \EE(\eta^N | X_T = x )   \} \mbox{ for all } (x,z) \in \mathcal X \times \RR.
$$
where we set $\zeta_t/\zeta_s = 1$ on the event $\{\zeta_s = 0\}$.
Then
$$
u=M=  m .
$$
\end{theorem} 
 
 \begin{remark} We are using the convention that all real processes have
 measurable sample paths, so that the pathwise integrals appearing
 in the statement of Theorem \ref{th:main-max-min} are well-defined.
 \end{remark}

\begin{remark}\label{rmk:tildeh} The proof will show that 
it is possible to replace the function $h$ appearing in the statement of Theorem \ref{th:main-max-min} with a function $\tilde{h}$ so long as
$\tilde h(x,z) \ge h(x,z)$ for all $(x,z)$ and 
$\tilde h(x,z) = h(x,z)$ when $\lambda(x)\PP(N=1|X_T=x) \le z \le \lambda(x)\EE(N|X_T=x)$.
For instance, we may take
 $$
\tilde h(x,z) = \max_{\eta \ge 0} \{\eta z - \lambda(x)
\EE( \eta^N |X_T = x) \} 
$$
\end{remark}

The full proof  of this result appears in Section \ref{se:proof-main}.
To put the above optimisation-based representations  into context, we jump ahead a bit. The rough idea behind the equality $u=M$ appearing in 
Theorem \ref{th:main-max-min} is  that the value function
of the stochastic optimal control problem defining $M$  should  satisfy 
 the Bellman equation of the problem. However,  we 
 have chosen the data
of the control problem in such a way that the associated Bellman equation is, essentially, the S-equation (in the terminology of   Ikeda--Nagasawa--Watanabe \cite[equation (4)]{INW2}) of the branching process.  Although we have not found this done
explicitly in other papers, we acknowledge that this    may not be surprising in this respect.

In contrast, the dual minimisation problem defining $m$  is not in the form of a standard stochastic control problem, and so the usual dynamic programming
arguments do not apply. In particular, there is no conventional Bellman
equation to this minimisation problem. Our   formulation of the dual problem is inspired by
the pathwise stochastic control approach of Rogers \cite{LCGR}. The general formulation
is a bit cumbersome, involving both a  maximisation over anticipative processes and a minimisation over
martingales.
However, in the special case of dyadic branching, when the number of offspring is the constant $N  =2$, the pathwise maximisation problem can be solved 
explicitly, yielding the following corollary:
\begin{corollary}\label{th:N2} With the notation of Theorem \ref{th:main-max-min},
suppose $N=2$ almost surely.  Then
$$
u  = 1 - \max_{\zeta \in \mathcal M} \EE  \left[ \frac{ (1-f(X_t)) \zeta_t
e^{\int_0^t \lambda(X_s) ds} } {   \zeta_t  + (1-f(X_t))
 \int_0^t  \zeta_s  \lambda(X_s) e^{\int_s^t \lambda(X_r)dr} ds    } \right ]
$$
\end{corollary}
A proof of this fact will be given in section \ref{se:proof-main}.
We find it somewhat surprising (maybe even mysterious) 
that the maximisation problem appearing in Corollary \ref{th:N2} is related to a dyadic branching process. 
 
A consequence of the connection between branching processes
and the various optimisation problems  is that
lower and upper bounds of certain functionals of the branching
process can be derived immediately, simply by evaluating the objective
functions of the optimisation problems at feasible controls. In particular,
this technique can be used in principle to derive asymptotic estimates on 
the behaviour of the branching process.

As an illustrative application, we consider
the branching process where $X$ is a real-valued L\'evy process,
and where the rate of branching $\lambda$ is a positive constant
and the distribution
of the number of offspring $N$ is independent of the position of the particles.
Let  $K$ be the  cumulant generating function
of the underlying L\'evy process, defined by
 $$
 \EE_x[ e^{\theta X_t} ]= e^{ \theta x+  t K(\theta)} \mbox{ for all } t \ge 0.
 $$ 
 Suppose that $K$ is finite in a neighbourhood of $\theta  = 0$.
 Recall that by the 
L\'evy--Khintchine formula we have
$$
K(\theta) = b \theta + \frac{1}{2} \sigma^2 \theta^2
+ \int_{\RR \backslash \{0\}} [e^{\theta y} - 1 - \theta y \one_{\{|y| \le 1\}}]
\nu(dy)
$$
for some constants $b, \sigma$ and measure $\nu$,
where we are supposing that $\int (e^{  \theta   y } 
\wedge y^2 ) \nu(dy) < \infty$ for all $\theta$ in some
neighbourhood of $\theta  = 0$.

 \begin{theorem}\label{th:branch-speed}  Let  $\mu = \EE(N)-1$ be the
 mean net number of new particles created at a branching event and suppose $\mu > 0$. 
Conditional on the event $\{ I_t \ne \emptyset \mbox{ for all } t \ge 0\}$ that the branching L\'evy  process never becomes extinct, 
we have
$$
\frac{1}{t} \max_{i \in I_t} X_t^i \to  \inf_{\theta > 0} \frac{K(\theta) + \lambda \mu}{\theta} \mbox{ in probability}
$$
\end{theorem}

\begin{remark}  The condition $\mu > 0$ is necessary and
sufficient for supercriticality of the branching process, that is $\PP( I_t \ne \emptyset \mbox{ for all } t \ge 0) > 0.$ See, for instance, the book of Athreya \& Ney \cite[Theorem III.4.1]{AN}.
\end{remark}

\begin{remark}  Consider the case where the  L\'evy process
$X$ is a standard Brownian motion, so that $K(\theta) = \frac{1}{2}\theta^2$.  Then Theorem \ref{th:branch-speed} says that, conditional
on the branching process not becoming extinct, the speed
of right-most particle is 
\begin{align*}
 \inf_{\theta > 0} \left( \frac{\theta}{2} + \frac{\lambda \mu}{\theta} \right) 
 &= \sqrt{2  \lambda \mu}.
 \end{align*}
\end{remark}

\begin{remark}
Versions of Theorem \ref{th:branch-speed} are  known, see
 for instance Biggins \cite[Corollary 2]{biggins}, but our precise formulation seems new and requires fewer assumptions.  More importantly, our proof will be rather different,
  using  estimates derived from Theorem  \ref{th:main-max-min}, rather than renewal theory.
  \end{remark}

The remainder of the paper is structured as follows. Section \ref{se:proof-main}
contains the  proof of Theorem \ref{th:main-max-min}.  
The key ingredient is a more   general representation result 
given by Theorem \ref{th:main}.
Section \ref{se:bounds} gives the main take-away implications
of  Theorem \ref{th:main}: easy to apply bounds on the 
solution to certain reaction-diffusion-type equations.
Section \ref{se:speed} contains the   proof of Theorem \ref{th:branch-speed} which
finds the speed of the right-most particle of a branching L\'evy process. 

\section{Proof of Theorem \ref{th:main-max-min} }\label{se:proof-main}
In this section, we prove  Theorem  \ref{th:main-max-min}.
We first prove a more general result.  
As in the introduction, let $X$ be a right-continuous
strong Markov process valued in a Polish space $\mathcal X$.  
As in Theorem \ref{th:main-max-min}, we let $\mathcal Z$, $\mathcal Z^\circ$ and $\mathcal M$ be the set of bounded adapted processes, bounded anticipative processes and non-negative martingales, respectively.

\begin{theorem}\label{th:main}
Let $\phi:\mathcal X \times \RR \to \RR$ be measurable and such that
$\phi(x, \cdot)$ is concave and differentiable with a
derivative bounded uniformly in $x \in \mathcal X$.  Suppose the bounded function $v:\RR_+ \times \mathcal X \to \RR$ 
satisfies the integral equation
$$
v(t,x) = \EE_x\left[ v(0, X_t) + \int_0^t \phi(X_s, v(t-s, X_s) ) ds \right]
$$
for all $(t,x)$.  Then
$$
v(t,x) = \min_{Z \in \mathcal Z} \EE_x \left[
e^{\int_0^t Z_s ds} v(0,X_t) - \int_0^t e^{\int_0^s Z_r dr }
\psi( X_s, Z_s) ds \right]
$$
where 
$$
\psi(x,z) = \inf_{\eta \in \RR} \{ \eta z - \phi(x, \eta) \}
\mbox{ for all } (x,z) \in \mathcal X \times \RR.
$$
For fixed $(t,x)$, a minimiser is given by the adapted control
$$
Z_s^* = \frac{\partial \phi}{\partial v}( X_s, v(t-s, X_s) ).
$$
If $v(t,x) \ge 0$ for all $(t,x)$ and 
 $\phi(x,0 ) = 0$ for all $x  $, then
$$
v(t,x) = \max_{\zeta \in \mathcal M} \EE_x \left[
\mathrm{ess \ inf}_{z \in \mathcal Z^\circ}\left\{
e^{\int_0^t z_s ds} v(0,X_t) - \int_0^t e^{\int_0^s z_r dr }
\psi( X_s, Z_s) ds \right\} \right]
$$
For fixed $(t,x)$, a maximiser is given by the non-negative martingale
$$
\zeta^*_s = v(t-s, X_s) e^{\int_0^s \theta(X_r, v(t-r, X_r)) dr  }
$$
where $\theta(x,\eta) = \phi(x,\eta)/\eta$ for all $x \in \mathcal X, \eta>0$. For the martingale $\zeta^*$, the essential infimum is attained
for the control $z^* = Z^*$.
\end{theorem}

\begin{remark}Formally, the differential form of the
integral equation appearing Theorem \ref{th:main} is
$$
\frac{\partial v}{\partial t} = \mathcal L v + \phi(x,v)
$$
where $\mathcal L$ is the infinitesimal generator of the Markov process $X$.  The hypothesis  can be reworded to say that $v$
is a mild solution of the above differential 
  equation.
  
 We also note in passing
that when $X$ is a diffusion process in finite-dimensional Euclidean space,  
so that $\mathcal L$ is a second order differential operator,
the semi-linear partial differential equation is of the  reaction-diffusion type.
 
   Under our assumption that $\phi(x, \cdot)$ is uniformly 
Lipschitz, one can show by a standard Picard
iteration argument that given a bounded initial condition $v(0,\cdot)$
the integral equation  has a unique solution $v$ bounded
on any bounded time intervals $[0,t]$. See for instance the
 paper of Cabr\'{e} \&
   Roquejoffre \cite[Section 2.3]{cabre_influence_2013}.
   In Theorem \ref{th:main}, we take this for granted and simply
   assume that the solution $v$ exists.
\end{remark}

\begin{proof} Fix    $(t,x)$ and let 
$$
M_s = V_s  + \int_0^s  \Phi_r dr
$$
where $V_s = v(t-s, X_s)$ and $\Phi_s = \phi(X_s, V_s)$.  
Note that $(M_s)_{0 \le s \le t}$ is a martingale.

The key observation is that 
\begin{align*}
 e^{\int_0^t  Z_s ds }v(0, X_t) 
- \int_0^{t}   e^{\int_0^s Z_r dr } 
\Psi_s ds  =&
M_t +  \int_0^t ( M_t - M_s) Z_s e^{\int_0^s Z_r dr} ds
\\
& +  \int_0^t (V_s Z_s - \Phi_s - \Psi_s )e^{\int_0^s Z_r dr} ds 
\end{align*}
where $\Psi_s = \psi(X_s, Z_s)$.
Note that the two path-wise Lebesgue integrals on the right-hand side are 
well-defined, though the second one might take the value $-\infty$.  Indeed, the integrand in the first integral
is Lebesgue integrable almost surely, since by the assumed boundedness of $Z$
there is a constant $c> 0$ such that
$$
 \EE_x \left(\int_0^t |(M_t-M_s) Z_s e^{\int_0^s Z_r dr}| ds\right) 
\le c \  \EE_x (|M_t| )  < \infty
$$  
and 
$$
\EE_x \left(  \int_0^t ( M_t - M_s) Z_s e^{\int_0^s Z_r dr} ds \right)=0
$$
by Fubini's theorem  and the tower property of conditional expectation.   
The integrand in the
second integral is non-positive
by  the Fenchel--Young inequality:
 $$
 \phi(x, v) + \psi(x, z)  \le vz.
 $$
 with equality if 
 $$
 z = \frac{ \partial \phi}{\partial v}(x, v).
 $$
Hence  
 $$
\EE\left( e^{\int_0^t  Z_s ds }v(0, X_t) 
- \int_0^{t}   e^{\int_0^s Z_r dr } \Psi_s \right)
\ge \EE( M_t ) = v(t,x)
$$
with equality if $Z = Z^*$. Note $Z^*$ is bounded, and hence feasible, by the assumption of uniform
boundedness of $\partial \phi/\partial v$.  This proves that $v(t,x)$ is the value   of the  minimisation problem.

Now consider the  max-min
problem.
Fix a
non-negative martingale $\zeta$ and note that by
Fubini's theorem and iterating expectations we
have
\begin{align*}
v(t,x) & =  
\EE\left( e^{\int_0^t  Z_s^* ds }v(0, X_t) 
- \int_0^{t}   e^{\int_0^s Z_r^* dr } \psi(X_s, Z_s^*) ds \right) \\
& = 
\EE\left( e^{\int_0^t  Z_s^* ds }v(0, X_t) 
- \int_0^{t}   e^{\int_0^s Z_r^* dr } \psi(X_s, Z_s^*)  
\frac{\zeta_t }{\zeta_s} ds \right)  \\
 & \ge \EE\left( \mathrm{ess \ inf}_z \left\{ e^{\int_0^t  z_s ds }v(0, X_t) 
- \int_0^{t}   e^{\int_0^s z_r dr } \psi(X_s, z_s)   
\frac{\zeta_t }{\zeta_s} ds \right\} \right).
\end{align*}
Since $\zeta$ is arbitrary, computing the supremum of the right-hand side
yields the lower bound.  

It remains to show that there is no duality gap. 
We now assume $\phi(x,0) = 0$ for all $x$.
  Under the
uniform Lipschitz assumption, the function $\theta$ is bounded.
Let  $\Theta_s = \theta( X_s, v(t-s, X_s) )$ and 
$$
\zeta_s^* = V_s e^{\int_0^s \Theta_r dr }
$$
Note that by Fubini's theorem
$$
\zeta_s^* = M_s + \int_0^s (M_s- M_r) \Theta_r e^{\int_0^r \Theta_q dq} dr
$$
so $\zeta^*$ is a non-negative bounded martingale.
Similar to the key observation above, we have
for any anticipative process $z$ that
\begin{align*}
 e^{\int_0^t  z_s ds }v(0, X_t) 
- \int_0^{t}   e^{\int_0^s z_r dr } 
\Psi_s\frac{\zeta_t^*}{\zeta_s^*} ds  =&
\zeta_t^*  +  \int_0^t (V_s z_s - \Psi_s - \Phi_s )e^{\int_0^s z_r dr}\frac{\zeta_t^*}{\zeta_s^*} ds \\
& \ge \zeta_t^*
\end{align*}
where here $\Psi_s = \psi(X_s, z_s)$.   Note there is 
equality when $z_s = Z_s^*$ for all $s$.  This shows
\begin{align*}
\EE   \left[\min_z  \left\{
  e^{\int_0^t  z_s ds }v(0, X_t) 
- \int_0^{t  }   e^{\int_0^s z_r dr } 
\psi(X_s, z_s) \frac{\zeta_t^* }{\zeta_s^*} ds \right\}   \right]
& = \EE[ \zeta_t^* ] \\
& = v(t,x)
\end{align*}
\end{proof}

To prove Theorem \ref{th:main-max-min} we need one more
ingredient, the so-called S-equation, due to Skorokhod \cite[equation (4)]{Skorokhod}. We provide a proof for completeness.

\begin{theorem}\label{th:skorokhod}  Let $\{X_t^i: i \in I_t, t \ge 0\}$ be the branching process  described in the introduction.
  Fix a measurable $f:\mathcal X \to [0,1]$
and for all $(t,x) \in \RR_+ \times \mathcal X$, let
$$
u(t,x) =\EE_x  \left[ \prod_{i \in I_t} f(X_t^i) \right].
$$
Then 
$$
u(t,x) = 
\EE_x  \left[  f( X_t)  
 +  \int_0^{t  }   g(X_s, u(t-s, X_s) )   ds \right]
 $$
 where 
 $$
 g(x, \eta) = \lambda(x)\left( \EE[ \eta^N  | X_T = x]
 - \eta \right) \mbox{ for all } (x,\eta) \in  \mathcal X \times
[0,1].
 $$
\end{theorem}

\begin{proof}
Letting
$$
G(x,\eta) = \EE[ \eta^N |X_T = x] \mbox{ for all } (x,\eta) \in  \mathcal X \times
[0,1] 
$$
be the conditional probability generating function of 
the offspring distribution, we have
\begin{align*}
\EE_x \left[   \one_{\{t \ge T \}}   \prod_{i \in I_t} f(X_t^i)  \right]  
& = \EE_x \left[   \one_{\{t \ge T\}} u(t-T, X_T)^N  \right] \\
& = \EE_x \left[  \one_{\{t \ge T\}}  G(X_T, u(t-T, X_T))  \right] \\
& =  \EE_x \left[ \int_0^t e^{-\int_0^s \lambda(X_r)dr }\lambda(X_s)  G(X_s, u(t-s, X_s)) 
 ds \right].
\end{align*}
Hence
\begin{align*}
u(t,x) &= 
\EE_x \left[   \one_{\{t < T \}} f(X_t) + \one_{\{ t \ge T\}}   \prod_{i \in I_t} f(X_t^i)  \right] \\
& = \EE_x \left[ e^{-\int_0^t \lambda(X_s) ds} f(X_t)+
\int_0^t e^{-\int_0^s \lambda(X_r)dr }\lambda(X_s)  G(X_s, u(t-s, X_s)) 
 ds \right].
 \end{align*}
 
 Fixing $(t,x)$, the process
 $$
 M_s = e^{-\int_0^s \lambda_r dr} U_s + \int_0^s e^{-\int_0^r \lambda_v dv} \lambda_r G(X_s, U_s) dr
 $$
 is a martingale, where $\lambda_s = \lambda(X_s)$ and $U_s = u(t-s, X_s)$.
 Then by Fubini's theorem we have
 $$
 f( X_t)  
 +  \int_0^{t  }   g(X_s, U_s) )   ds
 = M_t + \int_0^t \lambda_s e^{\int_0^s \lambda_r dr} (M_t-M_s) ds.
 $$ 
 By assumption, the function $\lambda$ is bounded and hence
 the pathwise integral on the  right-hand side is
  integrable, with mean zero.  Since $\EE_x(M_t) = M_0 = u(t,x)$ we are done. 
\end{proof}

\begin{remark}  McKean \cite{mckean_application_1975}
noted that that the solution of the FKPP equation
$$
\frac{\partial u}{\partial t} = \frac{1}{2} \frac{\partial^2 u}{\partial
x^2} + u^2 - u,
$$
named after Fisher \cite{fisher_wave_1937} and Kolmogorov--Petrovskii--Piskunov   \cite{a._kolmogorov_etude_1937},
can be represented in terms of a branching Brownian motion
with unit $\lambda = 1$ branching rate and binary $N=2$ offspring
distribution. This observation is an important special case of 
Theorem \ref{th:skorokhod}.  In this context, it
is often called the McKean representation of the solution
of the FKPP equation; see the lecture notes of Berestycki \cite[Section 2.3]{B} for instance.
\end{remark}

\begin{proof}[Proof of Theorem \ref{th:main-max-min}]
Let $v(t,x) = 1- u(t,x)$ where $u(t,x)$ is defined in Theorem \ref{th:skorokhod}.  Hence, the function
$v$ satisfies the hypothesis of Theorem \ref{th:main} with 
$$
\phi(x, \eta) = - g(x, 1- \eta),
$$
where the function $g$ is defined in Theorem \ref{th:skorokhod}.
Note that $\phi(x,0) = - g(x,1) = 0$. 
The optimisation representations for $v$ yield the 
desired optimisation representations for $u$ after some manipulation.
\end{proof}

Finally, we consider the case where
the number of offspring is constant $N =2 $. 

\begin{proof}[Proof of Corollary \ref{th:N2}]
We appeal to Remark \ref{rmk:tildeh} now, and replace 
the function $h$ appearing in Theorem \ref{th:main-max-min}
with the function 
$$
h(x,z) = \max_{\eta \in \RR} \{ \eta z - \lambda(x) \eta^2 \}
=  \frac{ z^2}{4 \lambda(x) }.
$$
This yields
$$
m = 1 - \max_\zeta \EE\left[ \mathrm{ess \ sup}_z 
\left\{ e^{\int_0^t z_s ds} (1-f_t)
+ \zeta_t \int_0^t e^{\int_0^s z_r dr}
 \frac{ (z_s-\lambda_s)^2}{4 \lambda_s \zeta_s }  
 ds \right\} \right]
 $$
where we use the notation $f_t=f(X_t)$ and $\lambda_s = \lambda(X_s)$.

Letting $w_s =  \frac{1}{2} (z_s- \lambda_s)$ we see
\begin{align*}
\int_0^t \frac{e^{\int_0^s \lambda_r dr}}{\lambda_s \zeta_s}
\left(   w_s e^{  \int_0^s  w_r dr} \right)^2 ds
\ge \frac{ (e^{  \int_0^t  w_s ds} - 1)^2}{ 
\int_0^t e^{-\int_0^s \lambda_r dr}  \lambda_s \zeta_s ds}
\end{align*}
by the Cauchy--Schwarz inequality, with equality if
$ w_s e^{  \int_0^s  w_r dr} 
= - e^{-\int_0^s \lambda_r dr}\lambda_s \zeta_s 
 $.  
Also letting $W_t = e^{\int_0^t w_s ds}$ we have
by completing the square that
$$
(1-f_t) e^{\int_0^t \lambda_s ds } W_t^2
+  \frac{ \lambda_t (W_t - 1)^2}{ 
\int_0^t e^{-\int_0^s \lambda_r dr}  \lambda_s \zeta_s ds}
\ge \frac{ \zeta_t (1-f_t) e^{\int_0^t \lambda_s ds}}{\zeta_t + (1-f_t) \int_0^t e^{ \int_s^t \lambda_r dr} \lambda_s \zeta_s ds}
$$
with equality if $W_t = \frac{ \zeta_t}{\zeta_t + (1-f_t) \int_0^t e^{ \int_s^t \lambda_s}  \lambda_s \zeta_s ds}
$.

Finally, note that both equality conditions are satisfied 
for the martingale $\zeta_s^* =  v_s e^{\int_0^s \lambda_r(1-v_r)dr }$ 
and the control $w^*_s = -\lambda_s v_s$, where
$v_s = v(t-s, X_s)$ and 
$$
v(t,x) = 1 - \EE_x \left[ \prod_{i \in I_t} f(X_t^i) \right].
$$
 \end{proof}

\section{Bounding solutions}\label{se:bounds}
In this section, we explore a simple consequence  of
Theorem \ref{th:main}.  We now assume
that the  non-linearity $\phi$ appearing in the integral
equation is such that
 there is concave, differentiable and Lipschitz function $\hat \phi: \RR \to \RR$ such that 
$\phi(x, \eta) = \hat \phi(\eta)$  for all $(x, \eta) \in \mathcal X
\times \RR$.  In order
to avoid overburdening the notation, we will drop that
$\hat{}$ and simply write this function as $\phi$. 

We also introduce the following notation. We let
 $\mathbf{V}_t$ be the operator that sends the 
 bounded measurable function $v_0: \mathcal X \to \RR$ to 
 $v(t, \cdot)$, where $v: \RR_+ \times \mathcal X \to \RR$
 is the unique bounded solution to the integral equation
 $$
 v(t,x) = \EE_x\left[ v_0(X_t) + \int_0^t \phi( v(t-s, X_s)) ds \right]
 \mbox{ for all } (t,x) \in \RR_+ \times \mathcal X,
 $$
 so that $v(t,x) = \mathbf V_t(v_0)(x)$.
 We 
 let $\mathbf{P}_t$ be the transition operator
of the Markov process, such that
$$
\mathbf{P}_t (f )(x) = \EE_x [ f(X_t)] \mbox{ for all } (t,x) \in \RR_+ \times \mathcal X,
$$
for all   bounded measurable $f$. 

Finally, we let 
$$
R_t(r_0) = \mathbf{V}_t ( r_0 \one) \mbox{ for all } (t,r_0) \in \RR_+ \times \RR
$$
where $\one(x) = 1$ for all $x \in \mathcal X$.  That is to say,
if $r:\RR_+ \to \RR$ solves the ordinary differential equation
$$
\dot{r} = \phi(r), \quad r(0) = r_0
$$
then $R_t(r_0) = r(t)$.  Now let $\mathbf{R}_t$ be the operator
defined by
$$
\mathbf{R}_t(f)(x) = R_t( f(x) ).
$$

The main result of this section 
is  the following non-asymptotic
form of the Trotter product formula:

\begin{corollary}\label{th:trotter}   Fix all
 bounded measurable $f$ and integers $n \ge 1$  we have
$$
 (\mathbf{P}_{t/n} \circ\mathbf{R}_{t/n})^n (f)(x)  \le 
\mathbf{V}_t(f)(x) \le (\mathbf{R}_{t/n} \circ \mathbf{P}_{t/n})^n (f)(x)$$
for all $(t,x) \in \RR_+ \times \mathcal X.$
\end{corollary}

\begin{remark}  Our Corollary \ref{th:trotter} is very much in the spirit
of a result of Cliff, Goldstein \& Wacker \cite[Theorem 18]{CGW}, 
though our  method of proof is rather different to theirs.
\end{remark}

\begin{remark}   
Recall that a solution $v$ to the integral equation  can
be interpreted as the mild   solution to the reaction
diffusion-type equation
$$
\frac{\partial v}{\partial t} = \mathcal L v + \phi(v)
$$
where $\mathcal L$ is the generator of the Markov process $X$. 
The `diffusion' term  
corresponds to the  Markov (linear) semigroup $(\mathbf{P}_t)_{t \ge 0}$ 
 generated by $\mathcal L$, while the `reaction' term corresponds to
 the non-linear semigroup  $(\mathbf{R}_t)_{t \ge 0}$   generated by the concave (state-independent) function $\phi$.  Finally, 
$(\mathbf{V}_t)_{t \ge 0}$ is the non-linear `reaction-diffusion'
 semigroup generated by the sum $\mathcal L + \phi$.  An interesting
 reformulation of Corollary \ref{th:trotter} is
 $$
 (e^{t \mathcal L/n}e^{t \phi/n} )^n \le e^{t (   \mathcal L+\phi)}
 \le   (e^{t \phi/n} e^{t \mathcal L/n})^n 
 $$
 \end{remark}

\begin{proof} 
The key ingredient of the proof is that by Theorem \ref{th:main} 
we have
$$
R_t(r_0) = \min_{z} \left\{ e^{\int_0^t z_s ds} r_0
- \int_0^t e^{\int_0^s z_r dr} \psi(z_s) ds \right\}
$$
where
$$
\psi(z) = \inf_\eta \{ z \eta - \phi(\eta) \}.
$$
and the minimum is over deterministic bounded
measurable functions 
$z:[0,t] \to \RR$. 

We first consider the case $n=1$. For 
the upper bound, note that 
by  Theorem \ref{th:main} we have
\begin{align*}
\mathbf{V}_t(f)(x) &\le \inf_{z} \EE_x \left[
  e^{\int_0^t  z_s ds }f(X_t) 
- \int_0^{t  }   e^{\int_0^s z_r dr }    \psi( z_s)  ds \right]  \\
& = \min_{z} \left\{
  e^{\int_0^t  z_s ds }\EE_x  [f(X_t)] 
- \int_0^{t  }   e^{\int_0^s z_r dr }    \psi( z_s)  ds \right\} \\
& =  \mathbf{R}_t \circ\mathbf{P}_t (f)(x). 
\end{align*}  
 
Similarly, letting $(Z^*_s)_{0 \le s \le t}$ be the maximiser 
of the minimisation in Theorem \ref{th:main}, we have
\begin{align*}
\mathbf{V}_t(f)(x) &= \EE_x \left[
  e^{\int_0^t  Z^*_s ds }f(X_t) 
- \int_0^{t  }   e^{\int_0^s Z^*_r dr } 
  \psi( Z_s^*)  ds \right]  \\
  & \ge \EE_x \left[ \min_z \left\{
  e^{\int_0^t  z_s ds }f(X_t) 
- \int_0^{t  }   e^{\int_0^s z_r dr } 
  \psi( z_s)  ds  \right\} \right]  \\
& = \EE_x [ R_t( f(X_t) ) ] \\
& =  \mathbf{P}_t \circ\mathbf{R}_t (f)(x).
\end{align*}

Now,  note that each of the operators $\mathbf{P}_t$,
$\mathbf{R}_t$ and $\mathbf{V}_t$ are 
increasing.  In
particular, we have
\begin{align*}
 \mathbf{V}_{s+t}(f)(x) &=  \mathbf{V}_s \circ  \mathbf{V}_t (f)(x)  \\
 & \ge  \mathbf{V}_s \circ  \mathbf{P}_t \circ  \mathbf{R}_t (f)(x) \\
 & \ge \mathbf{P}_s \circ  \mathbf{R}_s \circ \mathbf{P}_t \circ  \mathbf{R}_t (f)(x).
\end{align*}
The same argument works for the upper bound. Induction completes
the proof.
\end{proof}

 \begin{remark} Alternatively, in the case where $\phi(0) = 0$, 
 we could insert the martingale $\zeta_s = 1$ 
 into the objective of the max-min problem
 in Theorem \ref{th:main} to obtain the lower bound.
 \end{remark}

\begin{remark}
From the proof of Theorem \ref{th:main-max-min}, we say that 
an interesting case is when $\phi(\eta) = \lambda( 1- \eta - G(1-\eta) )$ where $\lambda > 0$ is constant and $G$ is the probability 
generating function of a non-negative integer-valued random variable $N$. This corresponds to the case of a branching process
with a constant branching rate $\lambda$ and the distribution
of the number of particles $N$ produced at a branching event
is independent of the event's location.  In this case, we have
the formula
$$
R_t(r_0) = 1 - \EE[ (1-r_0)^{|I_t| } ]
$$
 where $I_t$ is the set of particles alive at times $t \ge 0$.
\end{remark}

\section{An application to a branching L\'evy process} \label{se:speed}
In this section, we prove Theorem \ref{th:branch-speed}.
Recall that here the branching process $\{X_t^i: i \in I_t, t \ge 0\}$
is constructed from a real-valued L\'evy process $X$
starting from $X_0 = x_0$. Given the result we are trying to prove,
there is no loss assuming $x_0=0$.  Recall also
that the branching rate is a positive constant $\lambda$ and the distribution of the number of particles $N$ produced at a branching event is independent of the position of the particles.  Recall also that the cumulant generating function $K$ of  $X$ is assumed finite in a neighbourhood 
of the origin.  In what follows, we will let
 $\hat X$ be the L\'evy process with the transition distribution
of $-X$.  Note that the function $K$ plays the role
of the Laplace exponent of $\hat X$:
$$
\EE_x [ e^{- \theta \hat{X}_t }] = e^{\theta x + t K(\theta)}
$$
for all $t \ge 0$, where here the subscript $x$ denotes
conditioning on the event $\{\hat X_0 =x\}$.

The key step of our proof of Theorem \ref{th:branch-speed} 
is the following proposition:

\begin{proposition}\label{th:front} Let $v: \RR_+ \times \RR \to [0,1]$ solve the integral equation
$$
v(t,x) = \PP_x(  \hat X_t < 0 )
+ \EE_x \int_0^t \phi( v(t-s, \hat X_s) ) ds
$$ 
where $\phi:[0,1] \to \RR$ is concave and differentiable
 with $\phi(0) = 0 = \phi(\beta)$ where $0 < \beta \le 1$, and 
 $\phi'(0) =   \gamma > 0$.
 Set
$$
q = \inf_{\theta > 0} \frac{K(\theta) + \gamma}{\theta} 
$$
Then we  have
$$
v(t, rt) \to \left\{ \begin{array}{ll} \beta & \mbox{ if } r < q \\ 0 & \mbox{ if } r > q \end{array} \right.
$$
\end{proposition} 

Before we prove Proposition \ref{th:front}, we show
how it can be used to find the asymptotic speed of the 
right-most particle:

\begin{proof}[Proof of Theorem \ref{th:branch-speed}]
Let $u(t,\cdot)$ be the distribution function of 
$M_t = \sup_{i \in I_t} X_t^i$, with $M_t= -\infty$ when $I_t$
is empty.  Note that 
by the translational invariance
of the transition distribution of a L\'evy process
\begin{align*}
u(t,x) & = \PP  ( M_t \le x) \\
&= \PP_x ( \min_{i \in I_t} \hat{X}_t^i \ge 0) \\
& = \EE_x \left[ \prod_{i \in I_t} \one_{\{ \hat{X}_t^i \ge 0 \} } \right]
\end{align*} 
According to Theorem \ref{th:skorokhod} applied to the 
branching process $\{ \hat X_t^i: i \in I_t, \ t \ge 0\}$ we have
$$
u(t,x) = \PP_x (\hat X_t \ge 0)
+ \EE_x \int_0^t g( u(t-s, \hat X_s) ) ds
$$
where
$$
g(\eta) = \lambda( \EE[\eta^N]- \eta) \mbox{ for } 0 \le \eta \le 1.
$$
Note that $g$ is convex with $g(1)= 0$ and $g(0) = \PP(N=0) \ge 0$.
By the assumption that $\EE[N] > 1$, we have $g'(1) > 0$
and hence there exists a smaller root $0 \le \alpha < 1$
such that $g(\alpha) = 0$. 

Note that $v = 1- u$ satisfies the conditions of Proposition \ref{th:front} with $\beta  = 1-\alpha$.  
Now  recall
that $\alpha = \PP( E)$ where $E = \{I_t = \emptyset \mbox{ for some } t > 0 \}$ is the event that population eventually becomes
extinct. See the book of Athreya \& Ney \cite[Theorem III.4.1]{AN}.
Hence, we have shown
$$
\PP\left( M_t \le rt \right)
\to \left\{ \begin{array}{ll} \PP(E) & \mbox{ if } r < q \\ 1 & \mbox{ if } r > q \end{array} \right.
$$
Noting that $\PP\left( \big\{M_t \le rt\big\} \cap E \right) \to \PP(E)$
since 
\begin{align*}
\PP(E) &\ge \PP\left( \big\{M_t \le rt\big\} \cap E \right) \\
 &\ge \PP\left( \big\{M_t \le rt\big\} \cap \{ I_t = \emptyset\} \right) \\
& = \PP\left(   I_t = \emptyset \right) \\
 & \to \PP( E).
 \end{align*}
 the conclusion follows since
\begin{align*}
\PP\left( M_t \le rt \ \big| \ E^c \right)
& = \frac{1}{\PP(E^c)}[ \PP\left( M_t \le rt \right) -  \PP\left( \{M_t \le rt\}
\cap E \right)] \\
&\to \left\{ \begin{array}{ll} 0 & \mbox{ if } r < q \\ 1 & \mbox{ if } r > q. \end{array} \right.
\end{align*}
This shows that for any $\varepsilon > 0$ we have
$$
\PP( | \tfrac{1}{t} M_t - q| > \varepsilon \ | \ E^c )
 \to 0
 $$
 as desired.
\end{proof} 
 
  The rest of this section contains the proof of Proposition
  \ref{th:front}.  The case where $\hat X$ is degenerate,
  in the sense that $\hat X_t = x + bt$ for a constant $b$ is
  immediate. Therefore, we will assume without loss that $\hat X$ is non-degenerate, so that $K''(0) = \mathrm{Var}(\hat X_1) > 0$.

Of the two bounds, the upper bound is easier to obtain. 
 Using the $n=1$ case of Theorem~\ref{th:trotter}, 
 we have
$$
v(t,x) \le R_t( \PP_x (X_t <  0) ).
$$
By the concavity of $\phi$ we have
$$
\phi(v) \le \gamma v
$$
and hence by Gr\"onwall's inequality
$$
R_t(r_0) \le r_0 e^{\gamma t}.
$$
Now by Markov's inequality we have
$$
\PP_x( \hat X_t < 0 )   \le e^{ - x \theta + t K(\theta) }
$$
for any $\theta > 0$ and $t \ge 0$.  Putting this together, we have shown
$$
v(t, rt)  \le e^{ t(  K(\theta) +\gamma  - r \theta) }.
$$
If $r >  \frac{1}{\theta}( \Lambda(\theta) + \gamma) $ then
the right-hand side vanishes as $t \to \infty$, as claimed.

For the lower bound, we will introduce some more notation.
Let 
$$
F_t(y) = \PP_0( \hat X_t \le y)
$$ 
be the conditional distribution function of the random variable 
$\hat X_t$ given $\hat X_0=0$.  Note that by spacial homogeneity of the L\'evy process, we have
$$
\PP_x(\hat X_t \le y) = F_t(y-x).
$$
Let $F_t^{-1}$ be the quantile function, defined as
$$
F_t^{-1}(p) = \inf\{ x: F_t(x) \ge p \},
$$
 so that $F_t(x) \ge p \Leftrightarrow x \ge F_t^{-1}(p)$.

The key estimates are the following:
\begin{lemma}\label{th:lower} For all $0 <b < \beta $, $n \ge 1$, $t > 0$ and $x \in \RR$ we
have 
$$
v(t,x) \ge b F_\delta\left( - x - (n-1) F_{\delta}^{-1}\left(
\frac{Q_{\delta}^{-1}(b)}{b} \right) \right)
$$
where $  \delta = t/n$.
\end{lemma}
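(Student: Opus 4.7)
The plan is to apply Corollary \ref{th:trotter} in its $v$-formulation to obtain $v(t,\cdot) \ge (P_\delta \circ Q_\delta)^{(n)} v_0$, where $\delta = t/n$ and $v_0(x) = \one_{\{x < 0\}}$, and then prove by induction on $k$ that the iterate $\ell_k := (P_\delta \circ Q_\delta)^{(k)} v_0$ satisfies
$$\ell_k(x) \ge b\, F_\delta\bigl(-x - (k-1) y_0\bigr),$$
where $a := Q_\delta^{-1}(b)/b$ and $y_0 := F_\delta^{-1}(a)$; the case $k = n$ will be the claim. Note that $a \in (0,1]$ because $b \in (0,\beta)$ implies $g(b) > 0$ and hence $Q_\delta(b) > b$, so $Q_\delta^{-1}(b) \le b$; moreover, $F_\delta^{-1}$ is well-defined on $(0,1]$ since the underlying L\'evy process is non-degenerate.

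For the base case $k=1$, the pointwise bound $v_0 \ge b\, \one_{\{\cdot < 0\}}$ (since $b < 1$) together with the monotonicity of $Q_\delta$ and $Q_\delta(b) \ge b$ gives
$$\ell_1(x) = \EE_x\bigl[Q_\delta(v_0(X_\delta))\bigr] \ge b\, \PP_x(X_\delta < 0) = b\, F_\delta(-x),$$
which matches the claim at $k=1$.

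For the inductive step, the crucial algebraic identity is
$$Q_\delta(ab) = Q_\delta\bigl(Q_\delta^{-1}(b)\bigr) = b,$$
which says that $a$ is rigged precisely so that $Q_\delta$ exactly restores the amplitude $b$. By the definition of $y_0$ as the $a$-quantile of $X_\delta$,
$$F_\delta(-y - (k-1) y_0) \ge a\, \one_{\{y \le -k y_0\}}$$
pointwise in $y$; combining this with the inductive hypothesis $\ell_k(y) \ge b\, F_\delta(-y - (k-1) y_0)$, the monotonicity of $Q_\delta$, and $Q_\delta(0)=0$, one obtains
$$Q_\delta\bigl(\ell_k(X_\delta)\bigr) \ge Q_\delta\bigl(ab\, \one_{\{X_\delta \le -k y_0\}}\bigr) = b\, \one_{\{X_\delta \le -k y_0\}}.$$
Taking $\EE_x$ and using spatial homogeneity $\PP_x(X_\delta \le -k y_0) = F_\delta(-k y_0 - x)$ yields $\ell_{k+1}(x) \ge b\, F_\delta(-x - k y_0)$, closing the induction.

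The main subtlety is that $Q_\delta$ is non-linear and does not commute with the conditional expectation defining $P_\delta$, so at each step one must first crudely replace the smooth profile $b\, F_\delta(\cdot)$ by an indicator before applying $Q_\delta$. The choice of $a$ is exactly what preserves the amplitude $b$ through this reduction, so that only the spatial shift $y_0$ accumulates across the $n$ iterations, producing the $(n-1) y_0$ displacement in the final bound.
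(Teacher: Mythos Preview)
Your proof is correct and follows essentially the same inductive approach as the paper's: both use the lower bound from Corollary~\ref{th:trotter} in its $v$-formulation and the key observation that $Q_\delta[b F_\delta(k)] \ge b\,\one_{\{F_\delta(k) \ge Q_\delta^{-1}(b)/b\}}$, which converts each reaction step into a pure spatial shift by $F_\delta^{-1}(Q_\delta^{-1}(b)/b)$. The only cosmetic differences are that the paper inducts directly on $v(m\delta,\cdot)$ via the one-step bound $v((m+1)\delta,\cdot)\ge P_\delta\circ Q_\delta[v(m\delta,\cdot)]$ rather than on the iterates $\ell_k$, and its base case invokes $Q_\delta(1)\ge\beta$ instead of your $Q_\delta(b)\ge b$.
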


\begin{remark} It is interesting to note that Lemma \ref{th:lower} actually
holds with no assumption on law of the L\'evy process.  In particular,
it holds for processes, such as stable processes, for which the 
Laplace exponent $K(\theta)$ is infinite 
for all $\theta \ne 0$.
\end{remark}

\begin{proof}[Proof of Lemma \ref{th:lower}]
We fix $\delta$ and use induction on $n$. We first consider the $n=1$ case.

Since the points $0$ and $\beta \le 1$ are fixed points of $\phi$, we have
$R_\delta(0) = 0$ and  $R_{\delta}(1) \ge \beta$. In particular, we have
\begin{align*}
v(\delta,x) &\ge \mathbf{P}_\delta \circ 
\mathbf{R}_\delta  \one_{(-\infty, 0]}(x)\\
&  \ge  \beta \PP_x( \hat X_\delta \le 0) \\
& = \beta F_\delta(-x)
\end{align*}

To do the inductive step, we will make use of the following
observation:  for any $0< b < \beta$ 
and $k \in \RR$ we have
$$
R_{\delta} [ b F_\delta (k) ] \ge b \one_{\{ F_{\delta}(k) \ge R_{\delta}^{-1}(b)/b\}}
$$
since  $R_{\delta}$ is increasing on $[0, \beta]$.
Now suppose the claim is true for $n=m$, we have
\begin{align*}
v((m+1)\delta,x) &\ge 
\mathbf{P}_{\delta} \circ 
\mathbf{R}_{\delta} \left[ b F_\delta\left( - \cdot - (m-1) F_{\delta}^{-1}\left(
\frac{R_{\delta}^{-1}(b)}{b} \right) \right) \right](x) \\
   &\ge 
b \ \PP_x \left[  F_\delta\left( - \hat X_{\delta}- (m-1) F_{\delta}^{-1}\left(
\frac{R_{\delta}^{-1}(b)}{b} \right) \right) \ge \frac{R_{\delta}^{-1}(b)}{b}\right] \\
&= b \ F_\delta\left( -x - m F_{\delta}^{-1}\left(
\frac{R_{\delta}^{-1}(b)}{b} \right) \right).
\end{align*}

\end{proof}

\begin{lemma}\label{th:first} For all  $0< c < \gamma = \phi'(0)$ and 
all $0 < b< \beta$, where $\beta$ is the larger root of $\phi$, 
 there exists $\delta^* > 0$
such that $R_\delta^{-1}(b) \le b e^{-c\delta}$  for all $\delta \ge \delta^*$.
\end{lemma}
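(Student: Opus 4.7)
The quantity $Q_\delta^{-1}(b)$ is, by definition, the initial value $q_0$ for which the ODE $q' = g(q)$ starting at $q_0$ reaches $b$ at time $\delta$. Since $g > 0$ on $(0, \beta)$, every trajectory from $q_0 \in (0, \beta)$ increases monotonically to $\beta$, so $Q_\delta^{-1}(b) \to 0$ as $\delta \to \infty$. My plan is to quantify this convergence by combining the near-linear behaviour $g(q) \sim \gamma q$ near the origin with a Gr\"onwall comparison.

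The first step is to fix an intermediate rate $c' \in (c, \gamma)$. Since $g$ is concave with $g(0) = 0$ and $g'(0) = \gamma$, the ratio $g(q)/q$ is non-increasing on $(0,\beta)$ and tends to $\gamma$ as $q \to 0^+$; hence there exists $\epsilon \in (0, b)$ such that $g(q) \ge c' q$ on $[0, \epsilon]$. A standard Gr\"onwall argument then shows that while a trajectory $q(t) = Q_t(q_0)$ remains in $[0, \epsilon]$ one has $q(t) \ge q_0 e^{c' t}$.

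Next, for $\delta$ large enough that $q_0 := Q_\delta^{-1}(b) \le \epsilon$ (which is possible since $Q_\delta^{-1}(b) \to 0$), let $\tau$ be the first time the trajectory hits $\epsilon$. The subsequent transit time from $\epsilon$ to $b$ is the $q_0$-independent constant $T := \int_\epsilon^b dq/g(q)$, which is finite because $g > 0$ on the compact interval $[\epsilon, b] \subset (0, \beta)$. Thus $\tau = \delta - T$, and the Gr\"onwall bound at time $\tau$ yields $\epsilon \ge q_0 e^{c'(\delta - T)}$, i.e.\ $q_0 \le \epsilon\, e^{c' T}\, e^{-c' \delta}$.

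The only mildly delicate point is absorbing the constant prefactor $\epsilon\, e^{c' T}$ on the right-hand side: this is precisely why I work with $c'$ strictly larger than $c$, so that the surplus rate $c' - c > 0$ delivers $\epsilon\, e^{c' T}\, e^{-c' \delta} \le b\, e^{-c \delta}$ for all sufficiently large $\delta$, namely $\delta \ge \frac{1}{c' - c}\log(\epsilon\, e^{c' T}/b)$. Taking $\delta^*$ to be the maximum of this threshold and the one ensuring $q_0 \le \epsilon$ completes the proof.
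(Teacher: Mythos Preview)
Your proof is correct. The argument is a clean ODE comparison: linearise near the stable fixed point at $0$ via Gr\"onwall on a small interval $[0,\epsilon]$, bound the remaining transit time $[\epsilon,b]$ by the constant $T=\int_\epsilon^b dq/g(q)$, and use the slack $c'-c>0$ to absorb the resulting prefactor. All steps are justified; in particular the monotonicity of $g(q)/q$ follows from concavity and $g(0)=0$, and the existence of $\tau$ is immediate since the trajectory is increasing and continuous on $[q_0,b]$.

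The paper takes a different route. It introduces the time function $H(q)=\int_{q^*}^q ds/g(s)$, so that $Q_\delta^{-1}(b)=H^{-1}(H(b)-\delta)$, and then the desired inequality $Q_\delta^{-1}(b)\le be^{-c\delta}$ is equivalent (after the substitution $s=be^{-cx}$) to
\[
\frac{1}{\delta}\int_0^\delta \frac{c}{g(be^{-cx})/(be^{-cx})}\,dx \le 1,
\]
which holds for large $\delta$ because the integrand tends to $c/\gamma<1$. This is shorter and avoids splitting the trajectory, but it relies on a slightly slick change of variables and an asymptotic (l'H\^opital or Ces\`aro) argument. Your approach is more elementary and more explicit about the threshold $\delta^*$, at the cost of a few more lines; it is also the standard way one would argue hyperbolicity of a fixed point in an ODE course. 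Both are perfectly valid here.
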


\begin{proof}  Fix a $q^* \in (0, \beta)$, for instance $q^* = \beta/2$ and let
$$
H(q) = \int_{q^*}^q \frac{ ds}{\phi(s)}.
$$
Note that the differential equation defining $R$ can be solved as
$$
R_{\delta}(r_0) = H^{-1}( H(r_0) + \delta)
$$
for $0 < r_0 < \beta$, and hence
$$
R_{\delta}^{-1}(r_0) = H^{-1}( H(r_0) - \delta).
$$
In this notation, we must prove that 
$$
H(b) - \delta \le H(  b e^{-c\delta})
$$
 or equivalently
$$
\frac{1}{\delta} \int_0^\delta \frac{ b c e^{-cx} dx}{\phi( be^{-cx}) } \le 1
$$
for $\delta$ large enough.  To do this, note that the limit
of the left-hand side as $\delta \to \infty$ is $c/\gamma < 1$ by
l'H\^opital's rule. 
\end{proof}

\begin{lemma}\label{th:second} For all $r < q$
there exists a  $c  < \gamma$ and a $\delta^* > 0$ such that
$F_{\delta}^{-1}( e^{-c \delta}  ) \le - r \delta$ for all  $\delta \ge \delta^*$.
\end{lemma}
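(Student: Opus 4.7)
The plan is to translate the statement of the lemma into a large deviation lower bound for the underlying L\'evy process $L$ and then invoke Cram\'er's theorem. Since $X_\delta = -L_\delta$ under $\PP_0$ (coming from the substitution $X_t^i = x - L_t^i$), we have $F_\delta(y) = \PP(L_\delta \ge -y)$, so by the monotonicity of $F_\delta$ the desired inequality $F_\delta^{-1}(e^{-c\delta}) \le -r\delta$ is equivalent to
$$
\PP(L_\delta \ge r\delta) \ge e^{-c\delta}.
$$
Thus it is enough to exhibit a $c \in (0, \gamma)$ for which this lower bound holds for all sufficiently large $\delta$.

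If $r \le \Lambda'(0) = \EE[L_1]$, then the law of large numbers gives $\PP(L_\delta \ge r\delta) \to 1$, and the bound holds trivially for any $c > 0$. The substantive case is $r > \Lambda'(0)$, where the key is to show that $\hat\Lambda(r) < \gamma$, with $\hat\Lambda(v) = \sup_\theta[\theta v - \Lambda(\theta)]$ the Legendre transform. By the definition of $q$ as an infimum, $\Lambda(\theta) + \gamma \ge q\theta$ for every $\theta > 0$, whence $\sup_{\theta > 0}[q\theta - \Lambda(\theta)] \le \gamma$. Since we may further assume $q > \Lambda'(0)$ (otherwise we are already in the easy case above), this supremum equals the full Legendre transform $\hat\Lambda(q)$, so $\hat\Lambda(q) \le \gamma$. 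The non-degeneracy of $L$ makes $\Lambda$ strictly convex, which in turn forces $\hat\Lambda$ to be strictly increasing on $[\Lambda'(0), \infty)$; hence $\hat\Lambda(r) < \hat\Lambda(q) \le \gamma$ whenever $\Lambda'(0) < r < q$.

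Having chosen $c \in (\hat\Lambda(r), \gamma)$, the classical Cram\'er large deviation lower bound for L\'evy processes with cumulant generating function finite in a neighborhood of the origin (proved by an exponential tilt around the $\theta$ with $\Lambda'(\theta) = r$, combined with a central-limit-type estimate under the tilted measure) yields
$$
\liminf_{\delta \to \infty} \frac{1}{\delta} \log \PP(L_\delta \ge r\delta) \ge -\hat\Lambda(r) > -c,
$$
so $\PP(L_\delta \ge r\delta) \ge e^{-c\delta}$ for all sufficiently large $\delta$. The main obstacle I anticipate is the first step, carefully verifying the strict inequality $\hat\Lambda(r) < \gamma$ for $r < q$ and handling the edge cases near $r = \Lambda'(0)$ and when the infimum defining $q$ is not attained; the large deviation estimate itself is standard.
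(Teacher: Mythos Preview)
Your proof is correct and follows essentially the same route as the paper: translate the quantile inequality into the Cram\'er-type lower bound $F_\delta(-r\delta)\ge e^{-c\delta}$ and then establish the key strict inequality $\hat\Lambda(r)<\gamma$. The only difference is in that last step---the paper obtains $\hat\Lambda(r)\le -\varepsilon(q-r)+\gamma$ directly by restricting the supremum to $\theta\ge\varepsilon$, whereas you argue $\hat\Lambda(r)<\hat\Lambda(q)\le\gamma$ via strict monotonicity of $\hat\Lambda$ on $[\Lambda'(0),\infty)$; both are valid and ultimately rest on the non-degeneracy of $L$ (your claim $\PP(L_\delta\ge r\delta)\to 1$ at the borderline $r=\Lambda'(0)$ is not quite right, but any positive limit suffices there).
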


\begin{proof} 
Note that $q > -\EE_0(\hat X_1)$  with strict inequality since 
since $K(\theta) > - \theta \EE_0(\hat X_1)$ by Jensen's inequality. 
Hence we need only consider $r$ such that
$$
-\EE_0(\hat X_1) < r < q.
$$
 In particular, we may invoke 
 Cram\'er large deviation principle to conclude that, 
 $$
\log F_{\delta}(-r \delta) = - \hat K(r) \delta (1 + o(1) )
$$
as $\delta \to \infty$, where the large deviation rate
function $\hat K$ is the Legendre transform of
$K$, defined by
$$
\hat K ( \eta ) = \sup_{\theta} \left[ \eta \theta - K (\theta)\right].
$$
Hence, it is enough to show that 
$$
\hat K(r) < \gamma.
$$

Now, since $r > K'(0) = - \EE_0(\hat X_1)$, there exists an
$\varepsilon > 0$ such that $r >  K'(\varepsilon)$, since
$K'$ is continuous and increasing in a neighbourhood of $\theta = 0$.  By the 
convexity of $K$ we have the inequality
$$
r \theta - K(\theta) \le r \varepsilon - K(\varepsilon)
$$
for $\theta < \varepsilon$ and hence
\begin{align*}
\hat K(r) & = \sup_{ \theta \ge \varepsilon}   \left[ r \theta - K (\theta) \right] \\
& \le -\varepsilon(q-r) +  \sup_{ \theta \ge \varepsilon}   \left[ q \theta - K (\theta) \right].
\end{align*}
The conclusion follows since $q \theta - K (\theta) \le \gamma$ for
all $\theta > 0$ by the definition of $q$.
\end{proof}

\begin{proof}[Proof of Proposition \ref{th:front}] 
 Fix $0 < b < \beta$ and $r < q$. Pick $\bar r$
 such that $r < \bar{r} < q$. 
 By Lemma \ref{th:second}
 there exists a $c$ and $\delta_1^*$ such that 
$F_{\delta}^{-1}( e^{-c \delta}) \le - \bar{r} \delta$ for all $\delta \ge \delta^*_1$.  
By Lemma \ref{th:first} there exists $\delta_2^*$
such that $R_\delta^{-1}(b) \le b e^{-c\delta}$  
for all $\delta \ge \delta_2^*$. 

Let $m= 1 + \EE_0(\hat X_1)$. 
  By the weak law of large numbers
$$
F_\delta( m \delta) = \PP_0( \hat X_{\delta}/\delta \le m ) \to 1.
$$
So given $\varepsilon > 0$, there exists $\delta_3^*$ such that 
$F_\delta( m \delta) \ge 1- \varepsilon$ for $\delta \ge \delta_3^*$.

Let $n \ge \frac{\bar{r} + m }{\bar{r} - r}$  and 
 $t \ge n \max_i \{ \delta_i^* \}$.   Finally, let $\delta = t/n$, so $\delta \ge \max_i \{ \delta_i^* \}$
and hence
\begin{align*}
v(t, rt) &= v(n \delta,  r n \delta) \\
& \ge b F_{\delta}(- r n \delta - (n-1) F_{\delta}^{-1}\left( R_\delta^{-1}(b)/b \right)  ) \\
& \ge b F_{\delta}(- r n \delta - (n-1) F_{\delta}^{-1}(e^{-c\delta}) ) \\
& \ge b F_{\delta}( - r n \delta + (n-1) \bar{r} \delta) \\
& = b F_{\delta}( [n(\bar{r}-r) - \bar{r}] \delta ) \\
& \ge b F_\delta( m \delta)\\
& \ge b (1- \varepsilon).
\end{align*} 
Since $b < \beta$ and $\varepsilon > 0$ are arbitrary, the conclusion follows.
\end{proof}

 \begin{remark}
It is possible to express  the speed  $q$ of the travelling 
wave front in several ways. The above
 proof show that $q$ can be
rewritten as
$$
q = \sup\{ r: \hat K(r) < \gamma \},
$$
where  $\hat K$ is  the Legendre  transform of $\Lambda$.
This formulation for the speed of the right-most particle 
appears in the paper of Biggins \cite{biggins} or, more recently, in
the paper of Groisman \&  Jonckheere \cite{GJ}.

Following an idea in the paper of  Hiriart-Urruty \&  Mart\'inez-Legaz  \cite{HM}, an inverse to the function $\hat K$ can
be calculated as follow. First, define a new function $K^\circ$ by the formula
$$
K^\circ( \theta) = \left\{ \begin{array}{ll} +\infty & \mbox{ if } \theta \ge 0 \\
	  -\theta K(-1/\theta) & \mbox{ if } \theta < 0.
	\end{array} \right.
$$
Note that the function $K^\circ$ is convex, and indeed, it is related to the perspective
function of the Laplace exponent $K$.   Define its Legendre transform in the usual fashion
$$
\hat K^\circ( \eta ) = \sup_{\theta}\left[ \eta \theta - K^{\circ}(\theta)\right].
$$
Then it can be shown that an inverse function to $\hat K$ is the function $-\hat K^{\circ}$. In particular, the speed $q$ can be rewritten as
$$
q = - \hat K^\circ( \gamma ).
$$  
Simplifying the above formula recovers the formula in Proposition \ref{th:front}.
\end{remark}

\begin{remark}
Consider the case of dyadic branching Brownian motion, where
$N=2$ and $K(\theta) = \frac{1}{2}\theta^2$.  
Letting $m(t)$ be the median, defined by 
$$
\PP( \max_{i \in I_t} X_t^i \le m(t)) = 1/2
$$
we have
\begin{equation}\label{eq:median}
- \sqrt{t}  \Phi^{-1}\left(\frac{1 }{e^{t}+1} \right) \ge 
m(t) 
  \ge - \frac{\sqrt{t}}{\sqrt{n }}  \Phi^{-1}\left( \frac{1}{2b}\right) - \frac{(n-1)\sqrt{t}}{\sqrt{n }}  
  \Phi^{-1}\left( \frac{1}{e^{t/n}(1-b)+b} \right)  
 \end{equation}
for all $1/2 < b < 1, n \ge 1$, where 
$$
\Phi(z) = \int_{-\infty}^z \frac{e^{-s^2/2}}{\sqrt{2\pi}} ds
$$
is the standard normal distribution function. Indeed, the upper bound 
follows from the upper bound $1/2  \le v(t,m) =  R_t [ \PP(X_t \le -m ) ]$
and the calculation $R_t(r_0) = \frac{r_0}{r_0 + e^{-t}(1-r_0)}$ in the case when
 $\phi(v) = v(1-v)$. 
The lower bound is implied by Lemma \ref{th:lower}.

  Using $\Phi^{-1}(\varepsilon) = - \sqrt{2 \log(1/\varepsilon)} 
(1 + o(1) )$ as $\varepsilon \downarrow 0$ yields 
$$
m(t) = \sqrt{2}t + o(t).
$$
On the other hand, a famous result of Bramson \cite{bramson_maximal_1978} says
$$
m(t) = \sqrt{2}t - \frac{3}{2 \sqrt{2}} \log t + O(1).
$$
Since $\Phi^{-1}(\varepsilon) = - \sqrt{2 \log(1/\varepsilon)} 
+ O\left( \frac{\log \log( 1/\varepsilon) }{\sqrt{  \log(1/\varepsilon)}} \right)
 $
 the upper bound in equation \eqref{eq:median} actually
 recovers the correct order of magnitude of the second term of the expansion.  
It would be interesting to see if, by optimising over the free
parameters $b$ and $n$, it is possible to recover
the $\log t$ term in the lower bound as well.
\end{remark}

 \section{Acknowledgements}
David Driver was supported, in part, by the UK Engineering and Physical Sciences Research Council (EPSRC) grant EP/H023348/1 for the University of Cambridge Centre for Doctoral Training, the Cambridge Centre for Analysis.
Mike Tehranchi thanks the Cambridge Endowment for Research in Finance for 
their support.  Finally, both of us thank Nathanael Berestycki for his comments and 
suggestions.

\end{document}